\DeclareRobustCommand*{\bfseries}{%
	\not@math@alphabet\bfseries\mathbf
	\fontseries\bfdefault\selectfont
	\boldmath
}
\newtheorem{theorem}{Theorem}[section]
\newtheorem{corollary}[theorem]{Corollary}
\newtheorem{lemma}[theorem]{Lemma}
\newtheorem{conjecture}[theorem]{Conjecture}
\newtheorem{observation}[theorem]{Observation}
\newtheorem{question}[theorem]{Question}
\theoremstyle{definition}
\newtheorem{definition}[theorem]{Definition}
\theoremstyle{remark}
\newtheorem*{remark}{Remark}
\newtheorem*{theorem*}{Theorem}
\newtheorem*{claim*}{Claim}
\newcommand{\sym}{\mathrm{Sym}}
\newcommand{\qsym}{\mathrm{QSym}}
\newcommand{\Des}{\operatorname{Des}}
\newcommand{\rank}{\operatorname{rank}}
\newcommand{\ZZ}{\mathbb{Z}}
\newcommand{\NN}{\mathbb{N}}
\newcommand{\RR}{\mathbb{R}}
\newcommand{\QQ}{\mathbb{Q}}
\newcommand{\F}{\mathcal{F}}
\newcommand{\B}{\mathcal{B}}
\newcommand{\proofstep}[1]{%
  \par
  \addvspace{\medskipamount}
  \textbf{#1\@addpunct{.}}\enspace\ignorespaces
}
\begin{document}

\title{Tight Lower Bound for Pattern Avoidance and Symmetric Functions}
\author{Avichai Marmor}
\address{Department of Mathematics, Bar-Ilan University, Ramat-Gan 52900, Israel}
\email{avichai@elmar.co.il}
\date{July 22, 2023}
\thanks{Partially supported by the Israel Science Foundation, Grant No.\ 1970/18, and by the European Research Council under the ERC starting grant agreement No.\ 757731 (LightCrypt).}

\begin{abstract}
For a set of permutations (patterns) $\Pi$ in $S_k$, consider the set of permutations in $S_n$ that avoid all patterns in $\Pi$. In current algebraic combinatorics, a significant problem is to identify pattern sets $\Pi$ for which the corresponding quasisymmetric function is symmetric for all $n$. Recently, Bloom and Sagan proved that unless $\Pi \subseteq \{12\dots k, k \dots 21\}$, the size of such $\Pi$ must be at least $3$ for any $k \ge 4$. They also posed a general lower bound conjecture.


In this work, we resolve this conjecture and give a tight lower bound, namely, the minimal size of such $\Pi$ is exactly $k - 1$. The proof relies on a novel generalization of Bose's theorem in extremal combinatorics, utilizing the multilinear polynomial approach introduced by Alon, Babai, and Suzuki.
\end{abstract}

\keywords{Quasisymmetric function, Schur-positive set, symmetric function, pattern avoidance, intersecting family}

\maketitle

\tableofcontents

\section{Introduction}
Let $S_n$ denote the group of permutations on the set $[n] := \{1, \dots, n\}$. We represent permutations using one-line notation, and denote $\sigma_i := \sigma(i)$. For example, if $\sigma = 51243 \in S_5$ then $\sigma_1 = 5$ and $\sigma_2 = 1$. We also define two special permutations in $S_n$, namely the increasing permutation $\iota_n = 12 \dots n = \mathrm{Id}_n$ and the decreasing permutation $\delta_n = n \dots 21$. These permutations are referred to as the \emph{monotone elements}.

Let $k \le n$ and let $\pi \in S_k$ and $\sigma \in S_n$. We say that $\sigma$ \emph{contains the pattern} $\pi$, if there exist indices $1 \le i_1 < \dots < i_k \le n$ such that for all $1 \le j < j' \le k$,
\[ \pi_{j} < \pi_{j'} \Longleftrightarrow \sigma_{i_j} < \sigma_{i_{j'}}. \]
Otherwise (or if $k > n$), we say that $\sigma$ \emph{avoids} $\pi$. For instance, consider the permutation $\sigma = 7 \bm{2} \bm{6} 8 \bm{1} 5 \bm{4} 3 \in S_8$. It contains the pattern $2 4 1 3 \in S_4$ due to the subsequence $2 6 1 4$. The set of permutations in $S_n$ that avoid every $\pi \in \Pi$ is denoted $S_n (\Pi)$.

The study of pattern avoidance in permutations has a rich history. A notable early result in this area is the Erdős and Szekeres' theorem~\cite{erdos_szekeres}, which states (as a special case) that every $\sigma \in S_{a^2+1}$ contains a monotone subsequence of length $a+1$. In other words, for all $n \geq a^2+1$, we have $S_n (\{\iota_{a+1}, \delta_{a+1} \}) = \emptyset$. Enumerative problems related to pattern avoidance have also received significant attention. For instance, MacMahon~\cite{avoid123} showed that $|S_n(\{123\})|=|S_n(\{321\})|$ is equal to the Catalan number, and Knuth~\cite{avoid132} extended this result to $|S_n(\{\pi\})|$ for any $\pi \in S_3$.

A set $\B \subseteq S_n$ is referred to as \emph{symmetric} or \emph{Schur-positive} if its quasisymmetric generating function, as defined in Section~\ref{sec:preliminaries}, exhibits symmetry or Schur-positivity, respectively.
The study of symmetric functions is partially motivated by the correspondence between symmetric functions of degree $n$ and class functions on $S_n$, which is established through the Frobenius characteristic map described in~\cite[Chapter 4.7]{sagan_book}. A symmetric function is Schur-positive if its corresponding class function is a proper character (see~\cite{adin_roichman_fine_set} for a detailed explanation). This perspective provides a rich algebraic framework for studying symmetric and Schur-positive sets. The investigation of such sets was initiated in the seminal works of Gessel~\cite{knuth_classes_schur_positive} on Knuth-classes and Gessel---Reutenauer~\cite{gessel1993counting} on conjugacy classes.





In an effort to establish a connection between pattern avoidance and algebraic structures associated with Schur-positivity, the search for symmetric pattern-avoiding sets of permutations has gained interest. One notable example of such a Schur-positive set is the set of arc permutations, introduced by Elizalde and Roichman~\cite{arc_pattern_avoiding}. This set consists of permutations $\sigma \in S_n$ that avoid all patterns $\pi \in S_4$ with $|\pi(1) - \pi(2)| = 2$.

To further explore the concept, we introduce the notion of \emph{symmetrically avoided} sets:

\begin{definition}
Given $\Pi \subseteq S_k$, we say that $\Pi$ is \emph{symmetrically avoided} if the pattern-avoiding set $S_n(\Pi)$ is symmetric for all $n$.
\end{definition}

Elizalde and Roichman's result on arc permutations mentioned above can be equivalently stated by noting that the set of permutations $\{\pi \in S_4 \mid |\pi(1) - \pi(2)| = 2\}$ is symmetrically avoided. Building upon this result, Sagan and Woo posed the following problem:

\begin{question}[Sagan and Woo~\cite{problem_find_pattern}]
Which subsets $\Pi \subseteq S_k$ are symmetrically avoided?
\end{question}

Sagan and coauthors~\cite{problem_find_pattern, S3_patterns_list, BloomSagan} initiated the study of \emph{small} symmetrically avoided sets.
Using the Robinson---Schensted correspondence, it can be shown that the singleton sets $\Pi = \{\iota_k\}$ and $\Pi = \{\delta_k\}$, which consist of the increasing and decreasing elements, respectively, are symmetrically avoided (see~\cite[Section 2]{S3_patterns_list} for more details). Furthermore, it can be shown that $S_n(\Pi)$ is Schur-positive for these singleton sets. Bloom and Sagan~\cite[Theorem 2.3]{BloomSagan} proved that these are the only symmetrically avoided singleton sets.

By combining the results of Hamaker---Pawlowski---Sagan \cite[Theorem 1.2]{S3_patterns_list} and Bloom---Sagan \cite[Theorem 2.6]{BloomSagan}, a complete classification of $2$-element symmetrically avoided sets can be obtained.
In particular, it turns out that for every $k \ge 4$ and every $\Pi \subseteq S_k$ with two elements, $\Pi$ is symmetrically avoided if and only if $\Pi = \{\iota_k, \delta_k\}$.

Based on these findings, Bloom and Sagan formulated the following conjecture:

\begin{conjecture}[{\cite[Conjecture 2.7]{BloomSagan}}]
\label{conj:main conjecture}
For each $p \ge 3$, there exists a $K = K(p)$, such that if $|\Pi| = p$ and $\Pi \subseteq S_k$ for some $k \ge K$, then $\Pi$ is not symmetrically avoided.
\end{conjecture}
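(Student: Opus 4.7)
The plan is to deduce Conjecture \ref{conj:main conjecture} from a sharp quantitative lower bound: every symmetrically avoided $\Pi \subseteq S_k$ satisfies $|\Pi| \geq k - 1$. This immediately implies the conjecture with $K(p) = p + 2$, since then any $\Pi \subseteq S_k$ with $|\Pi| = p$ and $k \geq K(p)$ has $|\Pi| = p < k - 1$, contradicting the lower bound and hence failing to be symmetrically avoided.

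To establish this lower bound, I would first translate the requirement that $S_n(\Pi)$ be symmetric for \emph{every} $n$ into a combinatorial condition on $\Pi$ itself. Symmetry of the quasi-symmetric generating function of a set $T \subseteq S_n$ amounts to a family of linear identities among the enumerations of $T$ refined by the descent set $\Des(\sigma)$. By letting $n$ grow and tracking how pattern-avoiding permutations decompose according to descent composition, I expect the ``for every $n$'' quantifier to collapse to a finitary constraint on the descent sets (or a related statistic) of the patterns in $\Pi$, recasting $\Pi$ as a family of subsets of $[k-1]$ subject to prescribed pairwise intersection conditions.

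The second phase is to prove and apply an extremal set-theoretic bound: the new generalization of Bose's theorem (extending Ray-Chaudhuri--Wilson) advertised in the abstract, yielding a \emph{lower} bound on the size of any family of subsets of $[k-1]$ meeting the derived intersection conditions. Following Alon--Babai--Suzuki, the natural machinery is the multilinear polynomial method: associate to each member of the family a multilinear polynomial encoding its intersection data, verify linear independence of these polynomials in an ambient space whose dimension can be controlled, and conclude $|\Pi| \geq k - 1$ by dimension counting. This combined with the first phase completes the proof.

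I anticipate that the main obstacle lies in the translation step. The infinitely many symmetry conditions must be compressed into a single finite statement about the patterns of $\Pi$, and this statement must be sharp enough to force the optimal bound $k - 1$ rather than something weaker such as a bound growing like $\log k$. Once the right intersection-type condition is identified, the polynomial-method argument should proceed along a well-understood template, with the genuine novelty residing in the precise formulation and proof of the required extension of Bose's theorem.
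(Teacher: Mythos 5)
Your high-level strategy is exactly the paper's: establish the sharp lower bound $|\Pi| \geq k-1$ for symmetrically avoided $\Pi \subseteq S_k$, which yields $K(p) = p+2$, and drive the bound by a polynomial-method extension of Bose's theorem. But there are two genuine gaps in how you propose to carry it out.

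First, the translation step is far simpler than you anticipate, and your worry about ``compressing infinitely many symmetry conditions'' is misplaced. The paper simply takes $n = k$. Then $S_k(\Pi) = S_k \setminus \Pi$, and since $\mathcal{Q}_k(S_k)$ is symmetric, the symmetry of $S_k(\Pi)$ forces $\mathcal{Q}_k(\Pi)$ to be symmetric, i.e.\ $\Pi$ itself is a symmetric set. No limiting process in $n$ is needed; a single specialization collapses the problem. (One then discards $\iota_k, \delta_k$ from $\Pi$, which preserves symmetry, leaving a non-empty symmetric set with no monotone element.)

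Second, and more seriously, your incidence structure is dualized the wrong way, and with your setup a Bose-type theorem cannot give what you want. You propose to view $\Pi$ as a family of $|\Pi|$ subsets of $[k-1]$ and to extract a \emph{lower} bound on the family's size. But Bose-type theorems (and their polynomial-method proofs) bound the \emph{number of sets} from \emph{above} by the size of the ground set; they never give lower bounds on family size. If you encoded each $\pi \in \Pi$ by a subset of $[k-1]$, the theorem would tell you $|\Pi| \leq k-1$ --- the wrong direction. The paper transposes the incidence matrix: for each position $i \in [k-1]$ it defines $A_i = \{j : i \notin \Des(\pi_j)\} \subseteq [m]$ where $m = |\Pi|$ (or $|\Pi|$ minus monotone elements). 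This yields a family of $k-1$ sets living in the ground set $[m]$. The symmetry criterion (equal counts over rearrangement-equivalent compositions, applied to compositions of the form $(1^{i-1},2,1^{n-i-1})$, $(1^{i-1},3,1^{n-i-2})$, and $(1^{i-1},2,1^{j-i-2},2,1^{n-j-1})$) shows this family is $k$-uniform and $(\ell_1,\ell_2)$-intersecting; a separate lemma shows the $A_i$ are distinct when there is no monotone element and $n \geq 5$. Then the Bose-type upper bound $\#\text{sets} \leq \#\text{ground set}$ reads $k-1 \leq m$, which is exactly the lower bound on $|\Pi|$ you need. Without this transposition your plan stalls at the point where you would invoke the extremal theorem.
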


We prove that the conjecture holds with $K(p) = p+2$:
\begin{theorem}
\label{thm:my pattern theorem}
Let $p \ge 3$ and $k \ge p+2$ be positive integers, and let $\Pi \subseteq S_k$ with $|\Pi| = p$. Then $\Pi$ is not symmetrically avoided.
\end{theorem}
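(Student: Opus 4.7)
The plan is to argue by contrapositive: assuming $\Pi \subseteq S_k$ is symmetrically avoided with $|\Pi| = p$, I would show $p \ge k-1$. The overall idea is to translate symmetric avoidance into an extremal condition on the descent sets of the patterns in $\Pi$, and then invoke a Fisher/Bose-type lower bound on the resulting set family, proved via the multilinear polynomial method of Alon--Babai--Suzuki.

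\textbf{Step 1: Extract a finite linear-algebraic consequence of symmetric avoidance.} For a carefully chosen $n$ (likely $n = k+1$, or $n = k+c$ for a small constant $c$), the quasi-symmetric generating function
\[
Q_n(\Pi) \;=\; \sum_{\sigma \in S_n(\Pi)} F_{\Des(\sigma),\, n}
\]
is symmetric by hypothesis. Comparing coefficients of $F_{D,n}$ for descent sets $D \subseteq [n-1]$ lying in the same Schur class yields a finite linear system on the counts $a_D := |\{\sigma \in S_n(\Pi) : \Des(\sigma) = D\}|$. Expanding each $a_D$ via inclusion--exclusion over which patterns of $\Pi$ the permutation $\sigma$ contains converts the system into a collection of identities directly on the descent family $\F = \{\Des(\pi) : \pi \in \Pi\} \subseteq 2^{[k-1]}$ and its natural enlargements (shifts, complements, one-position extensions).

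\textbf{Step 2: Bound $|\F|$ by a generalized Bose inequality.} The identities from Step~1 are of intersection/size-restriction type, reminiscent of the hypotheses of Ray-Chaudhuri--Wilson, but with the conclusion reversed: we need a \emph{lower} bound $|\F| \ge k-1$. Following Alon--Babai--Suzuki, I would attach to each $T \in \F$ a multilinear polynomial $p_T$ in $k-1$ variables, built from the ``allowed'' intersection sizes $L$ dictated by Step~1, and arrange that the matrix $\bigl( p_T(\mathbf{1}_{T'}) \bigr)_{T,T'\in\F}$ is upper triangular with nonzero diagonal under a suitable ordering. Linear independence of $\{p_T : T \in \F\}$ inside the ambient multilinear polynomial space, which has dimension exactly $k-1$, then forces $|\F| \ge k-1$, completing the contradiction with $|\Pi| = p \le k-2$.

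The hard part, I expect, is Step~1: distilling from the infinite family of symmetry conditions (one per $n$, and indeed one per composition of each $n$) a finite set-theoretic identity on $\F$ that is simultaneously (i)~sharp enough to yield the tight lower bound $k-1$, and (ii)~clean enough to feed into the polynomial method. In particular, one must identify the correct set $L$ of allowed intersection sizes so that the associated multilinear polynomials span a space of dimension exactly $k-1$; too large an $L$ gives a useless bound, too small an $L$ is incompatible with the symmetry-derived identities. Once the right reformulation is in place, the polynomial argument of Step~2 should proceed along relatively standard Alon--Babai--Suzuki lines.
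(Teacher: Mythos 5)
Your Step~2 is logically backwards. Exhibiting linearly independent polynomials $\{p_T : T \in \F\}$ inside a space of dimension $k-1$ yields $|\F| \le k-1$, not the lower bound $|\F| \ge k-1$ you want; the polynomial method, like Bose and Ray-Chaudhuri--Wilson, always caps the number of sets by a function of the ground-set size. To convert such an upper-bound theorem into a lower bound on $|\Pi|$, one must dualize, and that dualization is the key step missing from your sketch: the paper works not with $\{\Des(\pi) : \pi \in \Pi\}$ but with the position-indexed family $A_i := \{\,j : i \notin \Des(\pi_j)\,\}$ for $i \in [k-1]$, i.e.\ $k-1$ subsets of a ground set of size $m := |\Pi \setminus \{\iota_k,\delta_k\}|$, and the extremal theorem then gives $k-1 \le m$, contradicting $m \le p \le k-2$.

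Two further deviations from the actual argument are worth flagging. First, the paper takes $n=k$, not $n=k+1$; then $S_n(\Pi) = S_k \setminus \Pi$, and since $\mathcal{Q}_k(S_k)$ is symmetric, symmetry of $S_k(\Pi)$ reduces immediately to symmetry of $\Pi$ itself, after which the monotone patterns may be discarded. Second, the intersection structure that symmetry of $\Pi$ forces on $\{A_i\}$ --- obtained by comparing $|\Pi(\alpha)|$ over equivalent compositions $\alpha$, e.g.\ $(1^{i-1},2,1^{k-i-1})$, $(1^{i-1},3,1^{k-i-2})$, and $(1^{i-1},2,1^{j-i-2},2,1^{k-j-1})$ --- is not an $L$-system but an $(\ell_1,\ell_2)$-intersecting condition in which $|A_i \cap A_j|$ depends on whether $|i-j|=1$ or $|i-j|\ge 2$. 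This falls outside Ray-Chaudhuri--Wilson, which is exactly why the paper must prove a new extremal result (Theorem~\ref{thm:my extremal theorem}), and it does so using only affine degree-$1$ polynomials $f_i(\bm x)=\sum_{j\in A_i}x_j-\ell_2$ together with an invertibility argument for the resulting tridiagonal Gram matrix (via the rational root theorem and a Gershgorin-type estimate), rather than the triangularization of Alon--Babai--Suzuki that you anticipate.
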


The bound is tight: For every $p \ge 3$, there exists a symmetrically avoided set $\Pi \subseteq S_{p+1}$ with $|\Pi| = p$. 
Such a set was discovered by Hamaker, Pawlowski and Sagan \cite[Section 5]{S3_patterns_list} and is discussed in Corollary~\ref{cor:minimal avoidance example}.

The proof utilizes a new generalization of a theorem by Bose~\cite{ray_chaudhuri_wilson_single_intersection_size} in extremal combinatorics, which bounds the size of an intersecting family. Before we proceed, let us define an intersecting family:

\begin{definition}
\label{def:intersecting family}
    Let $\F = \{A_1, \dots, A_m \}$ be a family of $m$ sets.
    \begin{enumerate}
        \item We say that $\F$ is \emph{$k$-uniform} if for all $i$, $|A_i| = k$.
        \item We say that $\F$ is \emph{$\ell$-intersecting} if for all $i \ne j$, $| A_i \cap A_j| = \ell$.
        \item We say that $\F$ is \emph{$(\ell_1, \ell_2)$-intersecting} if the following holds for all $i \ne j$:
            \begin{itemize}
                \item If $|i - j| = 1$, then $|A_i \cap A_j| = \ell_1$.
                \item If $|i - j| \ge 2$, then $|A_i \cap A_j| = \ell_2$.
            \end{itemize}
    \end{enumerate}
\end{definition}


In 1949, Bose presented an influential theorem that addressed the size of intersecting families:

\begin{theorem}[Bose~\cite{ray_chaudhuri_wilson_single_intersection_size}]
\label{thm:his extremal theorem}
Let $\F = \{A_1, \dots, A_m \}$ be a $k$-uniform family consisting of $m$ distinct subsets of $[n]$. If $\F$ is $\ell$-intersecting for some $\ell \ge 0$, then $m \le n$.
\end{theorem}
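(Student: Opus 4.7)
The plan is to apply the linear algebra method: associate to each $A_i$ its characteristic vector $v_i \in \RR^n$ and prove that $v_1, \dots, v_m$ are linearly independent in $\RR^n$, which immediately forces $m \le n$.

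The key observation is that $\langle v_i, v_j \rangle = |A_i \cap A_j|$, so the $k$-uniform and $\ell$-intersecting hypotheses give $\langle v_i, v_i \rangle = k$ and $\langle v_i, v_j \rangle = \ell$ for $i \ne j$. Hence the Gram matrix of $v_1, \dots, v_m$ has the explicit form
\[
G = (k - \ell)\, I_m + \ell\, J_m,
\]
where $I_m$ and $J_m$ are the $m \times m$ identity and all-ones matrices. Its eigenvalues are $k - \ell$, with multiplicity $m - 1$ on the hyperplane orthogonal to the all-ones vector, and $k + (m-1)\ell$, with multiplicity one along the all-ones vector.

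To finish, I would show $G$ is nonsingular, first handling the edge case $k = \ell$. If $m \ge 2$ and $k = \ell$, then $|A_i \cap A_j| = k = |A_i|$ forces $A_i = A_j$, contradicting distinctness. So we may assume either $m \le 1$, in which case the statement is trivial, or $k > \ell$. In the latter case both eigenvalues $k - \ell$ and $k + (m-1)\ell$ of $G$ are strictly positive (using $\ell \ge 0$ and $k \ge 1$), so $G$ is positive definite and in particular nonsingular. Therefore $v_1, \dots, v_m$ are linearly independent in $\RR^n$, yielding $m \le n$. The core linear algebra is essentially immediate once the Gram matrix is written down; the only real work is the edge-case check that $k > \ell$ whenever $m \ge 2$, supplied by the distinctness hypothesis.
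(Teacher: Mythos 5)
Your proof is correct: it is the classical Gram-matrix (Fisher-inequality) argument. Note that the paper does not actually prove Theorem \ref{thm:his extremal theorem} --- it is quoted as Bose's result --- so the natural comparison is with the paper's proof of the generalization, Theorem \ref{thm:my extremal theorem}. That proof takes a genuinely different route, the Alon--Babai--Suzuki polynomial method: it associates to each $A_i$ the affine polynomial $f_i(\bm x) = \sum_{j\in A_i} x_j - \ell_2$, shows that $f_1,\dots,f_m$ together with $g(\bm x)=\sum_j x_j - k$ are linearly independent in the $(n+1)$-dimensional space of polynomials of degree at most $1$, and concludes $m+1 \le n+1$. In the special case $\ell_1=\ell_2=\ell$ the resulting evaluation matrix becomes $(k-\ell)I$ in its top block, so independence is immediate once $k>\ell$; your direct inner-product approach instead meets the rank-one perturbation $\ell J_m$, which you neutralize by diagonalizing $(k-\ell)I_m+\ell J_m$ explicitly. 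For Bose's theorem alone your version is shorter and entirely self-contained. The paper's polynomial framing, however, is exactly what scales to the $(\ell_1,\ell_2)$-intersecting setting of Theorem \ref{thm:my extremal theorem}: subtracting $\ell_2$ from the inner products (and adjoining the auxiliary polynomial $g$ and an extra evaluation point $\bm w$) algebraically flattens the $\ell_2 J$ contribution, leaving a tridiagonal matrix whose invertibility must then be argued via the rational-root and Gershgorin-type estimates, instead of the $aI+bJ$ form whose eigenvalues you can read off at once.
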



Ray-Chaudhuri and Wilson later generalized this theorem in 1975~\cite{extremal_theorem}. Another proof of the Ray-Chaudhuri---Wilson theorem was presented by Alon, Babai, and Suzuki in 1991~\cite[Section 2]{alon_proof}, where they employed multilinear polynomials and analyzed their linear structure.


In this paper, we extend the scope of Bose's theorem in a different direction:

\begin{theorem}
\label{thm:my extremal theorem}
Let $\F = \{A_1, \dots, A_m\}$ be a $k$-uniform family consisting of $m$ distinct subsets of $[n]$. If $\F$ is $(\ell_1, \ell_2)$-intersecting for some $\ell_1, \ell_2 \ge 0$, then $m \le n$.
\end{theorem}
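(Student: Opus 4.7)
The plan is to mirror the linear-algebra proof of Bose's theorem (Theorem~\ref{thm:his extremal theorem}), introducing Alon--Babai--Suzuki-style polynomials tailored to the path structure of $(\ell_1,\ell_2)$-adjacency. Let $v_i\in\{0,1\}^n$ be the characteristic vector of $A_i$, and set $\alpha=k-\ell_2$, $\beta=\ell_1-\ell_2$. Consider the degree-one polynomials $g_j(y)=v_j\cdot y-\ell_2$; their evaluation matrix $T=[g_j(v_l)]$ is tridiagonal, namely $T=\alpha I+\beta P$, where $P$ is the adjacency matrix of the path on $[m]$.

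When $T$ is invertible I would define the dual polynomials $f_i=\sum_j(T^{-1})_{ij}g_j$, which satisfy $f_i(v_l)=\delta_{il}$ and are hence linearly independent. Each has the form $f_i(y)=u_i\cdot y-\ell_2\sigma_i$ with $u_i=\sum_j(T^{-1})_{ij}v_j\in\mathbb{R}^n$ and $\sigma_i=(T^{-1}\mathbf{1})_i\in\mathbb{R}$. The uniformity condition $v_j\cdot\mathbf{1}=k$ propagates to the identity $u_i\cdot\mathbf{1}=k\sigma_i$, so the $m$ independent vectors $(u_i,-\ell_2\sigma_i)\in\mathbb{R}^{n+1}$ all satisfy the single linear relation $\ell_2(u_i\cdot\mathbf{1})+k(-\ell_2\sigma_i)=0$; they therefore lie in a common $n$-dimensional hyperplane, giving $m\le n$. (When $\ell_2=0$ the $f_i$ are already in the $n$-dimensional space of linear forms without constant term, and the bound is immediate.)

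The matrix $T$ fails to be invertible precisely when $\alpha+2\beta\cos(\pi r/(m+1))=0$ for some $r\in[m]$, in which case $\ker T$ is spanned by $\phi_r(j)=\sin(\pi rj/(m+1))$. Writing $M=V^TV=T+\ell_2 J$ for the Gram matrix of the $v_j$'s, the uniformity $v_j\cdot\mathbf{1}=k$ forces any $c\in\ker M$ to have $\sum c_j=0$, so $Jc=0$ and $\ker M\subseteq\ker T$. When $r$ is odd, $\sum_j\sin(\pi rj/(m+1))\ne 0$ gives $\phi_r\notin\ker M$, so $M$ is invertible and $\operatorname{rank}(V)=m\le n$ by the classical Fisher-style rank argument.

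The main obstacle is the case $r$ even, where $\phi_r\in\ker M$ and we obtain a genuine linear dependence $\sum_j\phi_r(j)v_j=0$ with $\phi_r$ antisymmetric under the involution $\tau:j\mapsto m+1-j$. To extract a contradiction I plan to pass to the fibre subsets $T_x=\{j:x\in A_j\}$ (for $x\in[n]$), each of which must be a null subset in the sense $\sum_{j\in T_x}\phi_r(j)=0$. Writing $\omega=e^{i\pi r/(m+1)}$, this says $\sum_{j\in T_x}\omega^j=\sum_{j\in T_x}\omega^{-j}$; for $m+1$ prime the only relation in $\mathbb{Q}(\omega)$ is $\sum_{j=0}^{m}\omega^j=0$, and a short calculation then forces each $T_x$ to be $\tau$-invariant, giving $A_j=A_{m+1-j}$ for all $j$, which contradicts distinctness. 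The delicate part is the composite case $\gcd(r,m+1)>1$, where extra cyclotomic relations create non-palindromic null subsets and one must bring in the rigid incidence constraints $|A_i|=k$ and $|A_i\cap A_j|\in\{\ell_1,\ell_2\}$ to rule them out; this is where I expect the bulk of the combinatorial work.
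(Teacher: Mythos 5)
Your invertible case is correct and is the natural dual of the paper's: the paper adjoins $g(\bm x)=\sum_j x_j-k$ to $f_1,\dots,f_m$ and shows $m+1$ independent polynomials of degree at most one, whereas you show the $m$ dual polynomials lie in an $n$-dimensional hyperplane of that $(n+1)$-dimensional space. The singular case is where you have a genuine gap, which you yourself acknowledge. What you are missing is that $2\ell_2\le\ell_1+k$ holds automatically: for $m\ge 4$, inclusion--exclusion on $A_1,A_2,A_4$ gives $\ell_1\ge|A_1\cap A_2\cap A_4|\ge k-2(k-\ell_2)=2\ell_2-k$, with equality iff $A_1\cap A_2\subseteq A_4\subseteq A_1\cup A_2$. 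This places $\beta/\alpha=(\ell_1-\ell_2)/(k-\ell_2)$ in $[-1,1)$, and since this ratio is rational (the paper applies the Rational Root Theorem to the determinant polynomial, then a Gershgorin estimate) the tridiagonal $T$ can be singular only at $\beta/\alpha=-1$, i.e.\ exactly in the equality case $2\ell_2=\ell_1+k$, and even then only when $3\mid(m+1)$. Without this inequality you face the full cyclotomic problem, and the composite-$(m+1)$ case of your sketch is indeed open; the family's intersection constraints do not obviously exclude the non-palindromic null subsets.

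Once the degenerate case is isolated, the paper abandons linear algebra. From $2\ell_2=\ell_1+k$ one gets $A_i\cap A_{i+1}\subseteq A_j\subseteq A_i\cup A_{i+1}$ for every $j$ not adjacent to $i$ or to $i+1$; a short combinatorial chase (reduce to $\ell_1=0$ by deleting the common intersection, then trace six sets explicitly) shows that no six distinct sets can realize the required intersection pattern, so $m\le 5$. Together with the trivial cases $k\in\{0,1,n-1,n\}$ and $n=4,\ k=2$, this gives $m\le n$. That route is far shorter than the cyclotomic analysis and bypasses the $\ker M\subseteq\ker T$ machinery entirely. To complete your proof, establish $2\ell_2\le\ell_1+k$ first and then treat the boundary case by these combinatorial means rather than spectrally.
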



We present two proofs of Theorem~\ref{thm:my extremal theorem}. The first proof applies the multilinear polynomial method introduced by Alon, Babai and Suzuki, and the second applies matrix theory.

The remainder of this paper is organized as follows: Section~\ref{sec:preliminaries} provides the necessary background information, while Section~\ref{sec:proof extremal theorem} presents the proofs of Theorem~\ref{thm:my extremal theorem}. In Section~\ref{sec:proof pattern theorem}, we apply our generalized theorem to address the problem of symmetric pattern-avoiding sets. Finally, in Section~\ref{sec:conclusion} we conclude with further research.

\section{Preliminaries}
\label{sec:preliminaries}

\begin{definition}
Given $n \in \NN$, a \emph{partition} $\lambda$ of $n$ (denoted $\lambda \vdash n$), is a weakly-decreasing sequence $\lambda = (\lambda_1 \ge \lambda_2 \ge \dots \ge \lambda_k)$ of positive integers, such that $\lambda_1 + \dots + \lambda_k = n$.

A \emph{composition} $\alpha$ of $n$ (denoted $\alpha \vDash n$) is a sequence $\alpha = (\alpha_1, \alpha_2, \dots, \alpha_k )$ of positive integers, such that $\alpha_1 + \dots + \alpha_k = n$. Every partition is also a composition.
\end{definition}

Compositions of $n$ are in bijection with subsets of $[n-1] := \{1, \dots, n-1\}$ through the mapping:
\[
    [n-1] \supseteq \{i_1, i_2, \dots, i_k \} \mapsto (i_1, i_2 - i_1, \dots, i_k - i_{k-1}, n - i_k) \vDash n.
\]
For example, for $n = 10$, $\{2, 4, 7, 8\} \mapsto (2, 2, 3, 1, 2)$. The composition associated with a set $S$ is denoted $\alpha_S$, and the set associated with a composition $\alpha$ is denoted $S_\alpha$.

For two compositions $\alpha, \beta \vDash n$, we say that:
\begin{itemize}
    \item $\beta$ is a \emph{refinement} of $\alpha$ (denoted $\beta \le \alpha$) if $S_{\alpha} \subseteq S_{\beta}$.
    \item $\alpha$ and $\beta$ are \emph{equivalent} (denoted $\alpha \sim \beta$), if $\beta$ is a rearrangement of the elements of $\alpha$.
\end{itemize}

Denote the ring of symmetric functions by $\sym$ and the ring of quasisymmetric functions by $\qsym$ (see, for example, \cite[Section 1]{S3_patterns_list} for details). The space of homogeneous symmetric functions of degree $n$ is denoted by $\sym_n$, and the space of homogeneous quasisymmetric functions of degree $n$ is denoted by $\qsym_n$.

The space $\sym_n$ has a basis consisting of the \emph{monomial symmetric functions}:
\[ m_\lambda := \sum_{i_1,i_2, \dots, i_k \text{ distinct}} x_{i_1}^{\lambda_1} \cdots x_{i_k}^{\lambda_k}, \]
where $\lambda \vdash n$ is a partition. For example:
\[
    m_{3,1} = x_1^3 x_2 + x_2^3 x_1 + x_1^3 x_3 + x_3^3 x_1 + x_2^3 x_3 + x_3^3 x_2 + \dotsb
\]

The space $\sym_n$ also has another important basis consisting of the Schur functions $\{s_\lambda \mid \lambda \vdash n\}$. The definition of Schur functions can be found in \cite[Section 4.4]{sagan_book}.

The space $\qsym_n$ has a basis consisting of the \emph{monomial quasisymmetric functions}:
\[
    M_\alpha := \sum_{i_1<i_2< \dots < i_k} x_{i_1}^{\alpha_1} \cdots x_{i_k}^{\alpha_k},
\]
where $\alpha \vDash n$ is a composition. For example:
\[
    M_{1,3} = x_1 x_2^3  + x_1 x_3^3 + x_2 x_3^3 + \dotsb,\quad \text{and} \quad M_{3,1} = x_1^3 x_2  + x_1^3 x_3 + x_2^3 x_3 + \dotsb
\]

\begin{observation}
\label{obs:decomposition monomial bases}
For every $\lambda \vdash n$:
\[ m_{\lambda} = \sum_{\alpha \sim \lambda} M_\alpha \]
\end{observation}

Observation~\ref{obs:decomposition monomial bases} has a simple implication, which we believe is likely known but may not be explicitly stated in the literature.

\begin{corollary}
\label{cor:symmetric function criterion}
    The quasisymmetric function $f=\sum_\alpha c_\alpha M_\alpha$ is symmetric if and only if
\[ \forall \alpha, \beta \vDash n: \alpha \sim \beta \implies c_\alpha = c_\beta. \]
\end{corollary}

The space $\qsym_n$ has an additional important basis consisting of the \emph{fundamental quasisymmetric functions}
\[ F_S := \sum_{\begin{matrix} i_1 \le i_2 \le \dots \le i_n \\ \forall j \in S:i_j < i_{j+1} \end{matrix}} x_{i_1} x_{i_2} \cdots x_{i_n},\quad S \subseteq [n - 1].\]
We also denote $F_{\alpha} = F_{S_\alpha}$ for a composition $\alpha \vDash n$.

\begin{observation}
\label{obs:decomposition fundamental and monomial}
\[
F_{\alpha} = \sum_{\beta \le \alpha} M_\beta,
\]
or equivalently:
\[
F_{S} = \sum_{\beta : S_\beta \supseteq S} M_\beta.
\]
\end{observation}


\begin{definition}
The \emph{descent set} of a permutation $\pi \in S_n$ is defined as
\[ \Des(\pi) := \{i \in [n-1]: \pi_i > \pi_{i+1}\}. \]
\end{definition}

\begin{definition}
\label{def:generating function}
Let $\B \subseteq S_n$ be a set of permutations. Its \emph{quasisymmetric generating function} is defined as
\[ \mathcal{Q}_n (\B) := \sum_{\pi \in \B} F_{\Des(\pi)}. \]
\end{definition}

\begin{definition}
\label{def:symmetric set}
Let $\B \subseteq S_n$. We say that $\B$ is \emph{symmetric} if $\mathcal{Q}_n (\B) \in \sym_n$.
If, in addition, the expansion of $\mathcal{Q}_n (\B)$ in the Schur basis has nonnegative coefficients, then $\B$ is called \emph{Schur-positive}.
\end{definition}

\section{Proof of Theorem~\ref{thm:my extremal theorem}}
\label{sec:proof extremal theorem}

We begin with a simple observation:
\begin{observation}
\label{obs:intersecting family inequality of parameters}
    Let $\F = \{A_1, \dots, A_m\}$ be a $k$-uniform, $(\ell_1, \ell_2)$-intersecting family consisting of $m$ distinct subsets of $[n]$, and assume that $m \ge 4$. Then $2 \ell_2 \le \ell_1 + k$. Equality holds if and only if $A_1 \cap A_2 \subseteq A_4 \subseteq A_1 \cup A_2$.
\end{observation}
\begin{proof}
    The statement follows directly from the following inequalities:
    \begin{eqnarray*}
        \ell_{1} & = & |A_{1}\cap A_{2}| \ge |A_{1}\cap A_{2}\cap A_{4}| = |A_4|-|(A_4\cap A_{1}^{c})\cup (A_4\cap A_{2}^{c})|\\
        & \ge & |A_4|-|A_{4}\cap A_{1}^{c}|-|A_{4}\cap A_{2}^{c}| = 2 \ell_{2}-k,
    \end{eqnarray*}
    where $A^c := [n] \setminus A$ denotes the complement set of $A$.
\end{proof}

As we will see, intersecting families with $2 \ell_2 = \ell_1 + k$ exhibit unique properties. Therefore, the general proof does not apply to these families. However, it turns out that such singular families are exceedingly rare:
\begin{lemma}
\label{lem:family with wierd intersections is small}
Let $\F$ be a $k$-uniform family containing $6$ sets, and let $\ell_1, \ell_2 \ge 0$ satisfy $2\ell_2 = \ell_1 + k$. Then $\F$ cannot be $(\ell_1, \ell_2)$-intersecting.
\end{lemma}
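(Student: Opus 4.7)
My plan is to extract rigid combinatorial structure from the equality $2\ell_2 = \ell_1 + k$. As shown in the excerpt for the triple $(A_1, A_2, A_4)$, this equality forces $A_1 \cap A_2 \subseteq A_4 \subseteq A_1 \cup A_2$, and the very same derivation applies to any triple $(A_i, A_{i+1}, A_j)$ with $|j-i| \ge 2$ and $|j-(i+1)| \ge 2$. The pivotal observation is that among $6$ sets the edge $(5,6)$ has \emph{three} consecutive ``far'' vertices $j = 1, 2, 3$; this is where the hypothesis $m = 6$ is essential, since in a family of $5$ sets no edge admits three far vertices and the argument below would collapse.

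Set $I := A_5 \cap A_6$, $P := A_5 \setminus A_6$, $Q := A_6 \setminus A_5$, so $|I| = \ell_1$ and $|P| = |Q| = k - \ell_1$. For each $j \in \{1, 2, 3\}$ the sandwich produces a decomposition $A_j = I \sqcup P_j \sqcup Q_j$ with $P_j \subseteq P$ and $Q_j \subseteq Q$. Using $|A_j \cap A_5| = \ell_2$ together with $A_j \cap A_5 = I \sqcup P_j$, I would deduce $|P_j| = \ell_2 - \ell_1$; symmetrically $|Q_j| = \ell_2 - \ell_1$.

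Next I would apply the consecutive-intersection constraint $|A_1 \cap A_2| = \ell_1$. Expanding $A_1 \cap A_2 = I \sqcup (P_1 \cap P_2) \sqcup (Q_1 \cap Q_2)$ and comparing sizes forces $P_1 \cap P_2 = \emptyset$ and $Q_1 \cap Q_2 = \emptyset$. Here the hypothesis $2\ell_2 = \ell_1 + k$ re-enters: it gives $|P_1| + |P_2| = 2(\ell_2 - \ell_1) = k - \ell_1 = |P|$, so $\{P_1, P_2\}$ is actually a \emph{partition} of $P$. The identical reasoning with $|A_2 \cap A_3| = \ell_1$ shows that $\{P_2, P_3\}$ also partitions $P$; hence $P_1 = P \setminus P_2 = P_3$, and symmetrically $Q_1 = Q_3$. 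This yields $A_1 = A_3$, contradicting distinctness of the family.

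The delicate point I expect to be the crux is the tight numerical equality $|P_1| + |P_2| = |P|$: it is precisely the sharpness of $2\ell_2 = \ell_1 + k$ that upgrades disjointness of $P_1, P_2$ inside $P$ to an honest partition, and this upgrade is exactly what forces $P_1 = P_3$. Verifying that no weaker hypothesis suffices---and tracking how the edge $(5,6)$ is used together with two distinct consecutive edges $(1,2)$ and $(2,3)$ to produce the two partitions of $P$ that then must coincide---is the part of the argument that needs to be spelled out most carefully.
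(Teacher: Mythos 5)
Your proof is correct, and it takes a cleaner route than the paper's own argument. Both proofs begin from the same sandwich observation that, under $2\ell_2 = \ell_1 + k$, any set $A_j$ that is far from both endpoints of an adjacent pair $(A_i, A_{i+1})$ must satisfy $A_i \cap A_{i+1} \subseteq A_j \subseteq A_i \cup A_{i+1}$. From there, however, the approaches diverge. The paper first uses this sandwich on several triples to show $A_1 \cap A_2 \subseteq A_i$ for all $i$, then normalizes to $\ell_1 = 0$ by deleting the common core, and finally runs an explicit element-by-element construction of $A_1, \dots, A_6$ on the interval $(0, 5\ell_2]$, relying on repeated ``without loss of generality'' relabelings to pin down each set until $A_6 = A_4$ is forced. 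You instead fix the single edge $(5,6)$ and decompose each of $A_1, A_2, A_3$ (its three far vertices) as $I \sqcup P_j \sqcup Q_j$ relative to $I = A_5 \cap A_6$, $P = A_5 \setminus A_6$, $Q = A_6 \setminus A_5$; a short counting argument then shows that $\{P_1, P_2\}$ and $\{P_2, P_3\}$ both partition $P$ (and likewise for $Q$), giving $A_1 = A_3$ directly without any normalization or coordinate bookkeeping. Your approach is arguably preferable: it avoids the $\ell_1 = 0$ reduction, replaces the fragile WLOG-relabeling chain by a single partition observation, and makes transparent exactly where $m = 6$ is used (three far vertices for a single edge, spanning two consecutive pairs). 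One minor remark that applies equally to both proofs: the lemma as stated omits the hypothesis that the six sets are distinct, which is needed for the final contradiction (six copies of the same set form a degenerate $(\ell_1,\ell_2)$-intersecting family with $\ell_1 = \ell_2 = k$); you correctly invoke distinctness at the end, and the paper does so implicitly at ``$A_6 = A_4$, in contradiction.''
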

\begin{proof}
Assume, by contradiction, that $\F$ is $(\ell_1, \ell_2)$-intersecting. Observation~\ref{obs:intersecting family inequality of parameters} implies $A_1 \cap A_2 \subseteq A_4$. By applying a similar argument, it can be shown that for every $1 \le i \le 5$, and every $j$ that is neither adjacent to $i$ nor adjacent to $i+1$, $A_i \cap A_{i+1} \subseteq A_j$. Thus, $A_1 \cap A_2 \subseteq A_5$ and $A_1 \cap A_2 \subseteq A_6$, leading to $A_1 \cap A_2 \subseteq A_5 \cap A_6$. Combining this with the fact that $A_5 \cap A_6 \subseteq A_3$, we may deduce that $A_1 \cap A_2 \subseteq A_3$.

Therefore, $A_1 \cap A_2 \subseteq A_i$ for all $i$, resulting in $|\bigcap_i A_i | = \ell_1$. Removing the elements of $\bigcap_i A_i$ from all the sets yields a new intersecting family with $\ell_1 = 0$. Thus, we may assume, without loss of generality, that every two adjacent sets are disjoint, and $k = 2\ell_2$.

This case can be easily tested. Without loss of generality, let $A_1 = (0,2\ell_2] = \{1, \dots, 2\ell_2\}$. Since $A_1 \cap A_2 = \emptyset$, we assume, without loss of generality, that $A_2 = (2\ell_2,4\ell_2]$. With $A_2 \cap A_3 = \emptyset$ and $|A_1 \cap A_3| = \ell_2$, we may assume, without loss of generality, that $A_3 = (0,\ell_2] \cup (4\ell_2,5\ell_2]$. By similar reasoning, we find $A_4 = (\ell_2,3\ell_2]$ and $A_5 = (0,\ell_2] \cup (3\ell_2,4\ell_2]$. However, no set $A_6$ satisfies all the intersection properties, leading to a contradiction.
\end{proof}

To prove Theorem~\ref{thm:my extremal theorem} we also need the following lemma:
\begin{lemma}
\label{lem:M alpha invertible}
    Let $\alpha \in \QQ$ such that $-1 < \alpha < 1$. Consider the matrix $M_m(\alpha) \in \RR^{m\times m}$ defined as
    \[
        M_m(\alpha) := \begin{pmatrix}
        1 & \alpha & & 0 \\
        \alpha & 1 & \ddots & \\
         & \ddots & \ddots & \alpha \\
         0 & & \alpha & 1
        \end{pmatrix}.
    \]
    Then $M_m(\alpha)$ is invertible.
\end{lemma}

It is worth noting that the assumptions on $\alpha$ in Lemma~\ref{lem:M alpha invertible} are crucial for the invertibility of $M_m(\alpha)$. For instance, the matrices $M_2(\pm 1)$ and $M_3\left(\pm \frac{1}{\sqrt{2}}\right)$ are singular, demonstrating the necessity of the conditions $-1 < \alpha < 1$ and $\alpha \in \mathbb{Q}$.

\begin{proof}
The proof relies on analyzing the determinant $d_m(\alpha)$ of $M_m(\alpha)$. We first note that $d_m(\alpha)$ is a polynomial in $\alpha$ with integer coefficients. Specifically, the constant coefficient of $d_m(\alpha)$ is $d_m(0) = \det(I) = 1$, where $I$ is the identity matrix. Applying the Rational Root Theorem to $d_m(\alpha)$ under the assumption that $\alpha \in \QQ$, we conclude that the only possible solutions to $d_m(\alpha) = 0$ are $\alpha \in \frac{1}{\ZZ}$. Since $-1 < \alpha < 1$, it follows that $|\alpha| \leq \frac{1}{2}$.

For the remaining case, $|\alpha| \leq \frac{1}{2}$, we proceed with a proof inspired by the proof of Gershgorin's Circle Theorem~\cite{gershgorin}. Assume by contradiction that $M_m(\alpha)$ is singular, 
implying the existence of a non-zero vector $v \in \RR^m$ such that $M_m(\alpha) \cdot v = 0$.
We take the minimal index $1 \le i \le m$ such that $|v_i| = \max\{|v_j| \mid 1 \le j \le m\}$, and assume, without loss of generality, that $v_i = 1$. It follows that $|v_j| < 1$ for all $j < i$ and $|v_j| \le 1$ for all $j > i$.

If $v_1 = 1$, then we have $0 = (M_m(\alpha) \cdot v)_1 = v_1 +\alpha v_2$. This implies $1 = |-v_1| = |\alpha | \cdot |v_2| \le \frac{1}{2}$, which leads to a contradiction. Thus, $v_1 < 1$. The case where $v_m = 1$ is handled similarly.

Assuming that $1<i<m$, we have $0 = (M_m(\alpha) \cdot v)_i = v_i + \alpha(v_{i-1}+v_{i+1})$. Recall that $\alpha \ne 0$ because $\alpha \in \frac{1}{\ZZ}$. Therefore, we can rewrite the equation as $-\frac{1}{\alpha}=v_{i-1}+v_{i+1}$. This implies that $2 \le \frac{1}{|\alpha|} \le |v_{i-1}|+|v_{i+1}| \le 2$. Thus, we can conclude that all the inequalities hold as equalities. In particular, we have $|v_{i-1}| = 1$, which contradicts the minimality of $i$. Hence, we can deduce that $M_m(\alpha)$ is invertible, and the proof is complete.
\end{proof}

Now we are ready to prove Theorem~\ref{thm:my extremal theorem}. We will present two proofs: the first applies the multilinear polynomial method, while the second employs matrix theory.
\begin{proof}[Proof of Theorem~\ref{thm:my extremal theorem}]
First, note that the sets are distinct, implying $0 \le \ell_1, \ell_2 < k$.

The theorem obviously holds for $m \le 3$. Hence, we focus on the case where $m \ge 4$. By Observation~\ref{obs:intersecting family inequality of parameters}, we have $2 \ell_2 \le \ell_1 + k$.

Let us begin with the case where $2 \ell_2 = \ell_1 + k$. By Lemma~\ref{lem:family with wierd intersections is small}, we obtain that $m \le 5$. Assume, by contradiction, that $m > n$, leading to $n \le 4$. Since $n < m \le \binom{n}{k}$, we deduce that $n = 4$ and $k = 2$. As $2\ell_2 = \ell_1 + k = \ell_1 + 2$ and $\ell_2 < k = 2$, we conclude that $\ell_1 = 0$ and $\ell_2 = 1$. Without loss of generality, we may assume that $A_1 = \{1,2\}$ and $A_2 = \{3,4\}$.
However, it can be easily verified that no set $A_3 \subseteq [4]$ satisfies both $A_2 \cap A_3 = \emptyset$ and $|A_{1}\cap A_{3}| = 1$, contradicting the assumption that $\ell_1 = 0$ and $\ell_2 = 1$.

From now on we will assume that $2 \ell_2 < \ell_1 + k$. For each $1 \le i \le m$, we define a multivariate polynomial $f_i:\RR^n \to \RR$ as follows:
\[ f_i (\bm x) = \sum_{j \in A_i} x_j - \ell_2. \]
Additionally, we define the polynomial
\[ g (\bm x) = \sum_{j=1}^n x_j - k. \]

The set $\{f_1, \dots, f_m, g\}$ consists of $m+1$ elements, all belonging to the linear space of polynomials of degree at most $1$. The dimension of this space is $n+1$. Thus, to establish the desired result that $m \leq n$, it suffices to prove the following claim.

\begin{claim*}
The set $\{f_1, \dots, f_m, g\}$ is linearly independent over $\RR$.
\end{claim*}

In order to prove the claim, we associate indicator vectors with sets using the notation $(\bm{v_i})_j = \mathbbm{1}_{j \in A_i}$. Importantly, for every $i_1,i_2 \in [m]$, we observe that:
\[ f_{i_1}(\bm{v_{i_2}}) = \begin{cases}
			k - \ell_2, & i_1 = i_2\\
            \ell_1 - \ell_2, & |i_1 - i_2 | = 1\\
            0, & |i_1 - i_2 | \ge 2
		 \end{cases}\ . \]

Furthermore, we have $g(\bm{v_i}) = 0$ for all $i$. Define an arbitrary $\bm{w} \in \RR^n$ such that $g(\bm w) \ne 0$ (e.g., $\bm w = 0$).

Let us define a linear transformation $T$ from the polynomial space to $\RR^{m+1}$, by
\[ T:f \mapsto
    \begin{pmatrix}
       f(\bm{v_1}) \\
       \vdots \\
       f(\bm{v_m}) \\
       f(\bm w)
    \end{pmatrix}.
\]

To establish the claim, it remains to show that the set $\{T(f_1), \dots, T(f_m), T(g)\}$ forms a basis of $\RR^{m+1}$. The representing matrix of this set is:
\[ \begin{pmatrix}
k-\ell_2 & \ell_1-\ell_2 & & 0 & 0 \\
\ell_1-\ell_2 & k-\ell_2 & \ddots & & 0 \\
 & \ddots & \ddots & \ell_1-\ell_2 & \vdots \\
 0 & & \ell_1-\ell_2 & k-\ell_2 & 0 \\
 * & * & \dots & * & \ne 0
\end{pmatrix}.
\]

It suffices to show that this matrix is invertible, indicating the desired result. The matrix is invertible if and only if the minor obtained by deleting the last row and the last column is invertible, which can be represented as:
\[ A := \begin{pmatrix}
k-\ell_2 & \ell_1-\ell_2 & & 0 \\
\ell_1-\ell_2 & k-\ell_2 & \ddots & \\
 & \ddots & \ddots & \ell_1-\ell_2 \\
 0 & & \ell_1-\ell_2 & k-\ell_2
\end{pmatrix}.
\]
Notably, $A = (k-\ell_2)M_m(\alpha)$, where $\alpha = \frac{\ell_1-\ell_2}{k-\ell_2} \in \QQ$ and $M_m(\alpha)$ is the matrix defined in Lemma~\ref{lem:M alpha invertible}. It is worth noting that $-1 < \alpha < 1$ since $\ell_1 < k$ and $2\ell_2 < \ell_1+k$. Consequently, we may apply Lemma~\ref{lem:M alpha invertible} and conclude that $M_m(\alpha)$ is invertible. Thus, $A = (k-\ell_2)M_m(\alpha)$ is also invertible, thereby completing the proof.
\end{proof}

\begin{proof}[Alternative proof of Theorem~\ref{thm:my extremal theorem}]
Note that the sets $A_i$ are distinct, which implies $0 \leq \ell_1, \ell_2 < k$.

For $m \leq 3$, the theorem is clearly true. Hence, we focus on the case where $m \geq 4$. According to Observation~\ref{obs:intersecting family inequality of parameters}, we have $2\ell_2 \leq \ell_1 + k$.

Let us consider the case where $2\ell_2 = \ell_1 + k$. By Lemma~\ref{lem:family with wierd intersections is small}, we conclude that $m \leq 5$. 
Therefore, any potential counterexample for the statement must satisfy $n \leq 4$. Upon examining all families with $n \leq 4$, we can verify that none of them contradicts the statement.

Henceforth, we assume that $2\ell_2 < \ell_1 + k$. Recall that we are given $m$ sets $A_i \subseteq [n]$, for $ 1 \le i \le m$. We define a matrix $A \in \RR^{m \times n}$ such that $A_{ij} = \mathbbm{1}_{j \in A_i}$. Notably, the matrix $AA^t$ has entries \[(AA^t)_{i_1 i_2} = |A_{i_1} \cap A_{i_2}| = \begin{cases}
			k, & i_1 = i_2\\
            \ell_1, & |i_1 - i_2 | = 1\\
            \ell_2, & |i_1 - i_2 | \ge 2
		 \end{cases}\ . \]
   
Moreover, for any $n_1, n_2 \in \NN$, let $J_{n_1,n_2} \in \RR^{n_1 \times n_2}$ denote the all-ones matrix. Since $|A_i| = k$ for all $i$, we have $A \cdot J_{n,m} = k\cdot J_{m,m}$. Therefore, we obtain
\[
    B := A(A^t-\frac{\ell_2}{k}J_{n,m}) = \begin{pmatrix}
k-\ell_2 & \ell_1-\ell_2 & & 0 \\
\ell_1-\ell_2 & k-\ell_2 & \ddots & \\
 & \ddots & \ddots & \ell_1-\ell_2 \\
 0 & & \ell_1-\ell_2 & k-\ell_2
\end{pmatrix},
\]
where $B \in \RR^{m \times m}$. Importantly, $B = (k-\ell_2)M_m(\alpha)$, where $\alpha = \frac{\ell_1-\ell_2}{k-\ell_2} \in \QQ$, $-1 < \alpha < 1$ and $M_m(\alpha)$ is the matrix defined in Lemma~\ref{lem:M alpha invertible}. By applying Lemma~\ref{lem:M alpha invertible}, we conclude that $M_m(\alpha)$ is invertible. Thus, $B = (k-\ell_2)M_m(\alpha)$ is also invertible, implying that $\rank B = m$.

To conclude, we have $\rank(A(A^t-\frac{\ell_2}{k}J_{n,m})) = \rank B = m$. Thus, we obtain that $\rank A \ge m$. On the other hand, since $A \in \RR^{m \times n}$, we may deduce that $\rank A \le n$. Combining these inequalities, we obtain $m \le \rank A \le n$, as desired.
\end{proof}

\section{Proof of Theorem~\ref{thm:my pattern theorem}}
\label{sec:proof pattern theorem}

The proof is motivated by and generalizes Bloom and Sagan's proof~\cite[Section 2]{BloomSagan} that, for all $k>3$, every symmetrically avoided set $\Pi \subseteq S_k$ with at most $2$ elements consists only of monotone elements. Recall that the monotone elements are denoted by $\iota_n = 12 \dots n = \mathrm{Id}_n$ and $\delta_n = n \dots 21$.

First, we introduce a combinatorial criterion of symmetry, which is implicitly stated in Gessel---Reutenauer~\cite{gessel1993counting}.
\begin{definition}
\label{def:respect composition}
    Let $\B \subseteq S_n$ be a set of permutations. The subset of permutations that \emph{respect} a given composition $\alpha \vDash n$, denoted $\B(\alpha)$, consists of the permutations $\pi \in \B$ such that $\Des(\pi) \subseteq S_\alpha$, where $S_\alpha$ is the set corresponding to the composition $\alpha$.
\end{definition}

\begin{lemma}
\label{lem:symmetry criterion}
    A set $\B \subseteq S_n$ is symmetric if and only if $|\B(\alpha)| = |\B(\beta)|$ for all $\alpha \sim \beta \vDash n$.
\end{lemma}

\begin{proof}
By Definition~\ref{def:symmetric set}, the set $\B$ is symmetric if and only if its generating function $\mathcal{Q}_n (\B)$ is symmetric. We have:
\begin{align*}
    \mathcal{Q}_n (\B) & =  \sum_{\pi \in \B} F_{\Des(\pi)} & \text{(Definition~\ref{def:generating function})} \\
    & = \sum_{\pi \in \B} \  \sum_{\alpha : S_\alpha \supseteq \Des(\pi)} M_\alpha & \text{(Observation~\ref{obs:decomposition fundamental and monomial})} \\
    & = \sum_{\alpha \vDash n} \big| \left\{\pi \in \B \mid \Des(\pi) \subseteq S_\alpha \right\}\big| \cdot M_\alpha \\
    & = \sum_{\alpha \vDash n} |\B(\alpha)| \cdot M_\alpha.
\end{align*}
The statement now follows from Corollary~\ref{cor:symmetric function criterion}.
\end{proof}

For example, denote $\pi_1 = 1243$ and $\pi_2 = 4213$, and consider the set $\B = \{\pi_1,\pi_2\} \subseteq S_4$. Let $\alpha=(3,1)$ and $\beta = (1,3)$ denote two equivalent compositions. Since $\Des(\pi_1) = \{3\}$, $\Des(\pi_2) = \{1,2\}$ and $S_\alpha = \{3\}$, we have $\B(\alpha) = \{\pi_1\}$. On the other hand, since $S_\beta = \{1\}$, we have $\B(\beta) = \emptyset$. Therefore, $|\B(\alpha)| = 1 \ne |\B(\beta)| = 0$ and the set $\B$ is not symmetric.

To proceed with the proof of Theorem~\ref{thm:my pattern theorem}, we state a few additional lemmas. The first lemma establishes a connection between symmetric sets and intersecting families, as defined in Definition~\ref{def:intersecting family}:
\begin{lemma}
\label{lem:descents are uniform intersecting}
Let $n \in \NN$, and assume that a set $\B = \{\pi_1, \dots, \pi_m\} \subseteq S_n$ is symmetric. For every  $1 \le i \le n-1$, let $A_i := \{1 \le j \le m \mid i \notin \Des(\pi_j) \}$. Then, there exist non-negative integers $k, \ell_1, \ell_2$ such that the family $\{A_i \mid 1 \le i \le n-1\}$ is $k$-uniform and $(\ell_1,\ell_2)$-intersecting.
\end{lemma}

\begin{proof}
According to Definition~\ref{def:intersecting family}, we need to establish three claims: (1) $|A_i|$ is constant for all $i$, (2) $|A_i \cap A_{i+1}|$ is constant for all $i$, and (3) $|A_i \cap A_j|$ is constant for $j \ge i+2$.
We prove each claim separately, employing a similar approach for each.
    \begin{enumerate}
        \item \emph{$|A_i|$ is constant for all $i$:} Consider the compositions of the form $\alpha_i := (1^{i-1}, 2, 1^{n-i-1} )$. These compositions are pairwise equivalent. By applying Lemma~\ref{lem:symmetry criterion}, we conclude that $|\B(\alpha_i)|$ does not depend on $i$. Notably, $\pi \in \B(\alpha_i)$ if and only if $ \pi \in \B$ and $i \notin \Des(\pi)$.
        Thus, we have $\B(\alpha_i) = \{\pi_j \mid j\in A_i\}$, implying that $|\B(\alpha_i)| = |A_i|$ is constant for all $i$.
        \item \emph{$|A_i \cap A_{i+1}|$ is constant for all $i$:} Consider the compositions $\alpha_{i,i+1} := (1^{i-1}, 3, 1^{n-i-2} )$, where $i \le n-2$. These compositions are pairwise equivalent as well. Applying Lemma~\ref{lem:symmetry criterion}, we conclude that $|\B(\alpha_{i,i+1})|$ does not depend on $i$. Notably, $\pi \in \B(\alpha_{i,i+1})$ if and only if $ \pi \in \B$ and $i,i+1 \notin \Des(\pi)$. Thus, we have $\B(\alpha_{i,i+1}) = \{\pi_j \mid j\in A_i \cap A_{i+1}\}$, implying that $|\B(\alpha_{i,i+1})| = |A_i \cap A_{i+1}|$ is constant for all $i$.
        \item \emph{$|A_i \cap A_j|$ is constant for $j \ge i+2$:} Similarly to the previous proofs, consider the pairwise equivalent compositions of the form $\alpha_{i,j} := (1^{i-1}, 2, 1^{j-i-2}, 2, 1^{n-j-1} )$ where $j \ge i+2$. By employing Lemma~\ref{lem:symmetry criterion}, we observe that $|\B(\alpha_{i,j})| = |A_i \cap A_j|$ does not depend on $i$ and $j$.
    \end{enumerate}
\end{proof}

Given a symmetric set $\B$ and utilizing the notation from Lemma~\ref{lem:descents are uniform intersecting}, the family $\{A_i \mid 1 \le i \le n-1\}$ is $k$-uniform and $(\ell_1,\ell_2)$-intersecting, where $k=|A_1|$, $\ell_1 = |A_1 \cap A_2|$ and $\ell_2 = |A_1 \cap A_3|$, as implied by Lemma~\ref{lem:descents are uniform intersecting}. As we proceed to apply Theorem~\ref{thm:my extremal theorem} and provide an upper bound on the number of sets in the family, we first need to ensure that all the sets are distinct:

\begin{lemma}
\label{lem:all columns distinct}
Let $n \ge 5$, let $\B \subseteq S_n$ be a symmetric non-empty set with no monotone elements, and let $i_1 \ne i_2$. Then there exists $\pi \in \B$ such that $i_1 \in \Des(\pi)$ and $i_2\notin \Des(\pi)$.
\end{lemma}
\begin{remark}
The assumption that $n \ge 5$ is necessary, as the statement does not hold for $n=4$. For example, consider the Knuth class $\B = \{3412, 3142\}$, known to be symmetric (and even Schur-positive). We have $\Des(3412) = \{2\}$ and $\Des(3142) = \{1, 3\}$, so every $\pi \in \B$ descends in $1$ if and only if it descends in $3$. Therefore, the statement of the lemma does not apply to $n=4$.
\end{remark}

\begin{proof}
Assume, by contradiction, that $i_1 \in \Des(\pi)$ implies $i_2\in \Des(\pi)$ for all $\pi \in \B$. Using the notation of Lemma~\ref{lem:descents are uniform intersecting}, we have $A_{i_2} \subseteq A_{i_1}$. By Lemma~\ref{lem:descents are uniform intersecting}, there exist $k, \ell_1, \ell_2 \ge 0$, such that the family $\{A_i \mid 1 \le i \le n-1\}$ is $k$-uniform and $(\ell_1,\ell_2)$-intersecting.

If $|i_1-i_2|=1$, then we have $\ell_1 = |A_{i_1} \cap A_{i_2}| = |A_{i_2}| = k$. Otherwise, we have $\ell_2 = k$. In the latter case, we obtain $|A_1 \cap A_4| = \ell_2 = |A_1| = |A_4| = k$ (recall that $n \ge 5$, so the family consists of $n-1 \ge 4$ sets), which implies $A_1 = A_4$. Similarly, $|A_2 \cap A_4| = |A_2| = |A_4|$, leading to $A_2 = A_4$. Thus, we have $A_1 = A_2$ and $\ell_1 = k$.

In either case, we conclude that $\ell_1 = k$. Therefore, for every $1 \le i \le n-2$, we have $|A_i \cap A_{i+1}| = |A_i| = |A_{i+1}|$, which implies $A_i = A_{i+1}$. This implies that all the sets in the family are equal, and every $\pi \in \B$ is monotone, contradicting the assumption of the lemma.
\end{proof}

Let us combine these lemmas to show that, as long as $n \ge 5$, every non-empty symmetric set with no monotone elements has at least $n-1$ elements:

\begin{lemma}
\label{lem:there is no small symmetric set}
For every $n \ge 5$, every non-empty \emph{symmetric} set $\B \subseteq S_n$ with no monotone elements has at least $n-1$ elements.
\end{lemma}
\begin{remark}
It is well known that for $n \ge 5$, the minimal size of a non-empty \emph{Schur-positive} set $\B \subseteq S_n$ with no monotone elements is $n-1$. This minimum is achieved, for instance, by a Knuth equivalence class corresponding to the partition $(n-1, 1)$~\cite{knuth_classes_schur_positive}. The lemma states that the same bound holds for the weaker property of symmetry as well.

The assumption that $n \ge 5$ is necessary here too, as the counter-example for Lemma~\ref{lem:all columns distinct} for $n=4$ holds here as well. Specifically, the symmetric set $\B = \{3412, 3142\}$ has only two elements.
\end{remark}

\begin{proof}[Proof of Lemma~\ref{lem:there is no small symmetric set}]
Let $\B \subseteq S_n$ be a symmetric set with $|\B| = m$. We follow the notation of Lemma~\ref{lem:descents are uniform intersecting} for the sets $A_i \subseteq [m]$. By Lemma~\ref{lem:descents are uniform intersecting}, the family $\{A_i \mid 1 \le i \le n-1\}$ is $k$-uniform and $(\ell_1,\ell_2)$-intersecting. Furthermore, by Lemma~\ref{lem:all columns distinct}, all the sets $A_i$ are distinct. Thus, we can apply Theorem~\ref{thm:my extremal theorem} and deduce that $n-1 \le m$, as required.
\end{proof}

Now we can continue with the proof of the main theorem.

\begin{proof}[Proof of Theorem~\ref{thm:my pattern theorem}]
It suffices to show that the pattern-avoiding set $S_k(\Pi) = S_k \setminus \Pi$ is not symmetric.

Assume by contradiction that $S_k \setminus \Pi$ is symmetric. Therefore, its generating function $\mathcal{Q}_k (S_k \setminus \Pi)$ is symmetric (according to Definition~\ref{def:symmetric set}). Definition~\ref{def:generating function} implies that $\mathcal{Q}_k (S_k \setminus \Pi) = \mathcal{Q}_k (S_k) - \mathcal{Q}_k (\Pi)$. The function $\mathcal{Q}_k(S_k)$ is symmetric (see, e.g., \cite[Section 2]{S3_patterns_list}), so the function $\mathcal{Q}_k (\Pi)$ is symmetric as well. Therefore, the set $\Pi$ is symmetric.

Since the sets $\{\iota_k\}$ and $\{\delta_k\}$ are symmetric, we obtain that the set $\Pi \setminus \{\iota_k, \delta_k\}$ is symmetric as well. In addition, $|\Pi| = p \ge 3$, so $\Pi \setminus \{\iota_k, \delta_k\}$ is a non-empty symmetric set with no monotone elements. By Lemma~\ref{lem:there is no small symmetric set}, we may deduce that $|\Pi| \ge k-1$, which contradicts $|\Pi \setminus \{\iota_k, \delta_k\}| \le p \le k-2$.
\end{proof}

\section{Further remarks and open problems}
\label{sec:conclusion}



\subsection{Intersecting families}
\label{subsec:further research intersceting families}
Theorem~\ref{thm:my extremal theorem} states that any uniform, $(\ell_1, \ell_2)$-intersecting family of distinct subsets of $[n]$ contains at most $n$ sets. This bound is tight, as can be observed from the family $\F = \{\{i\} \mid i \in [n]\}$.

Noga Alon brought to our attention an interesting variant of Theorem~\ref{thm:my extremal theorem}. The variant states that any $(\ell_1, \ell_2)$-intersecting (not necessarily uniform) family of distinct subsets of $[n]$ contains at most $n+2$ sets. The idea for the proof of this variant is that the Gram matrix of the characteristic vectors of the sets has some values on the main diagonal, $\ell_1$ on the two next diagonals and $\ell_2$ everywhere else. If $\ell_1=\ell_2$ then the statement follows from Bose's theorem (Theorem~\ref{thm:his extremal theorem}). Otherwise, subtracting $\ell_2 J$ (where $J$ is the all-ones matrix that has rank $1$), we obtain a similar matrix with $\ell_2$ replaced by $0$ and $\ell_1$ replaced by $\ell_1-\ell_2$. This matrix contains an $(m-1) \times (m-1)$ invertible triangular submatrix that has rank $m-1$, implying that the rank of the Gram matrix is at least $m-2$. Its rank is clearly at most $n$, implying that $m \le n+2$. It is not known whether this bound is tight as well.

After determining the extremal value for a problem in extremal combinatorics, a natural next step is to characterize the objects that achieve this extremum. In the case of Bose's theorem (described in Theorem~\ref{thm:his extremal theorem}), extremal constructions, known as symmetric block designs, have been extensively studied in design theory. Various families of symmetric block designs, such as projective spaces and Hadamard designs, are already known, and several necessary conditions for symmetric block designs have been established (see, e.g.,~\cite[Part II, Section 6]{designs_book} for further details). However, a complete characterization of symmetric block designs remains an open problem.

Given that the bound in Theorem~\ref{thm:my extremal theorem} generalizes Bose's theorem and is tight, it is natural to seek a characterization of its extremal constructions as well. It is worth noting that symmetric block designs naturally arise as extremal $k$-uniform, $(\ell_1,\ell_2)$-intersecting families, satisfying the property $\ell_1 = \ell_2$. Conversely, any extremal family with $\ell_1 = \ell_2$ is a symmetric block design. However, other extremal families with $\ell_1 \ne \ell_2$ may also exist. To the best of our knowledge, there has been no analysis of such families to date. The only such family we have found (modulo permuting the elements of the sets and complementing the sets) is:
\begin{equation}
\label{eqn:exceptional intersecting set}
    \F = \big\{\{1,2\},\; \{3,4\},\; \{1,5\},\; \{2,3\},\; \{1,4\}\big\}.
\end{equation}
Combining the findings of Lemma~\ref{lem:family with wierd intersections is small} with a computer test conducted for the remaining cases, it can be demonstrated that this is the only extremal family with $n>1$ and with the property that $2\ell_2 = \ell_1 + k$, which is suggested as a singular property by the proof of Theorem~\ref{thm:my extremal theorem}. Moreover, this family possesses a compelling algebraic justification for its existence, as it describes (following Lemma~\ref{lem:descents are uniform intersecting}) the descent distribution of any Knuth class of a standard Young tableau (SYT) of shape $(2,2,2)$, known to be Schur-positive~\cite{knuth_classes_schur_positive} (see~\cite{adin_roichman_fine_set} for the relations between SYTs and Schur-positive sets). Except for the descent distributions of Knuth classes of shape $(n-1,1)$ or $(2,1^{n-2})$, every descent distribution of a Schur positive subset of $S_n$ of size $n-1$ with no monotone elements is equivalent to \eqref{eqn:exceptional intersecting set}. These two reasons suggest that \eqref{eqn:exceptional intersecting set} is an exceptional family and raise the following question:
\begin{question}
    Is there a uniform $(\ell_1, \ell_2)$-intersecting family with $\ell_1 \ne \ell_2$ and $n \ne 5$?
\end{question}

A computer test for all $n \le 16$ did not find any such family.

\subsection{Small symmetric sets}
Lemma~\ref{lem:there is no small symmetric set} shows that every \emph{symmetric} set $\B \subseteq S_n$ with no monotone elements is either empty or contains at least $n-1$ elements.
A similar result is known for \emph{Schur-positive} sets. The intriguing connection between the minimal sizes of symmetric and Schur-positive sets raises the following conjecture:
\begin{conjecture}
\label{conj:minimal symmetric set always schur positive}
Every symmetric subset of $S_n$ of size $n - 1$ without monotone elements is Schur-positive.
\end{conjecture}

While Lemma~\ref{lem:there is no small symmetric set} provides a lower bound on the size of non-trivial symmetric sets based on an analysis of intersecting families, applying the same approach to prove Conjecture~\ref{conj:minimal symmetric set always schur positive} is expected to be challenging. Section~\ref{subsec:further research intersceting families} highlights the difficulty in completely characterizing extremal intersecting families. We conducted a computer test to identify all extremal intersecting families (i.e., families corresponding to Theorem~\ref{thm:my extremal theorem}) with at most $14$ sets, and multiple families were discovered. However, none of them corresponded to a symmetric set that is not Schur-positive, thereby confirming Conjecture~\ref{conj:minimal symmetric set always schur positive} for $n < 16$.

In summary, while the conjecture holds for small values of $n$ and is likely to hold for all $n$, proving it in general will likely require a deeper understanding of symmetric sets beyond the analysis of $(\ell_1,\ell_2)$-intersecting families.

Another interesting question regarding sizes of symmetric sets is the following:
\begin{question}
\label{ques:sizes of symmetric sets}
Let $n \in \NN$. For which values $p \in \NN$, does there exist a symmetric set $\B \subseteq S_n$ of size $p$ with no monotone elements?
\end{question}
While Lemma~\ref{lem:there is no small symmetric set} establishes that such sets do not exist for $n \ge 5$ and $0 < p < n-1$, there remain unanswered questions about the possible sizes of symmetric sets with no monotone elements.

We know that symmetric and Schur-positive sets of size $p=n-1$ exist. However, it can be shown that for $n \ge 5$, there are no Schur-positive sets of size $n$ with no monotone elements (this result can be derived, for example, from Adin and Roichman's criterion to Schur-positivity \cite[Theorem 1.5]{adin_roichman_fine_set}). On the other hand, there do exist symmetric sets of size $n$ without monotone elements. One such example is the set given by
\[
\left\{n12\dots(n-1)\right\} \cup \left\{n(n-1)12\dots(n-2), 1n(n-1)23\dots(n-2),\dots, 12\dots(n-2)n(n-1)\right\}.
\]
This set consists of the permutation $n12\dots(n-1)$ and all permutations obtained by inserting the labels $n(n-1)$ as adjacent entries, with $n$ appearing first, in the series $12\dots (n-2)$. This set is symmetric and corresponds to the symmetric function $s_{(n-1,1)} + s_{(n-2,1,1)} - s_{(n-2,2)}$.

However, a general answer to Question~\ref{ques:sizes of symmetric sets} remains unknown. Further investigations are required to determine the possible sizes of symmetric sets without monotone elements.

\subsection{Symmetrically avoided sets}
Theorem~\ref{thm:my pattern theorem} implies that for every $p \ge 3$ and $k \ge p+2$, no $p$-element set $\Pi \subseteq S_k$ is symmetrically avoided. The bound is tight, as there exists a symmetrically avoided $(k-1)$-element set $\Pi \subseteq S_{k}$ for every $k \ge 4$. This result can be attributed to a theorem by Hamaker, Pawlowski, and Sagan:

\begin{theorem}[{\cite[Lemma 5.7]{S3_patterns_list}}]
\label{thm:inverse descent class pattern knuth closed}
For any $J \subseteq [k-1]$, the inverse descent class $D_J^{-1} := \{\pi \in S_k \mid \Des(\pi^{-1}) = J\}$ is symmetrically avoided.
\end{theorem}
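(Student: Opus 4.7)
The plan is to show that for every $n$, the set $S_n(D_J^{-1})$ decomposes as a disjoint union of Knuth equivalence classes in $S_n$. Once this is in place, the symmetry (indeed Schur-positivity) of $\mathcal{Q}_n(S_n(D_J^{-1}))$ follows at once from Gessel's fundamental identity
\[ \sum_{\pi \in S_n:\, P(\pi) = P_0} F_{\Des(\pi)} \ = \ s_{\mathrm{sh}(P_0)}, \]
where $P(\pi)$ denotes the RSK insertion tableau of $\pi$ and $\mathrm{sh}(P_0)$ its shape: summing the formula over the Knuth classes that make up $S_n(D_J^{-1})$ expresses $\mathcal{Q}_n(S_n(D_J^{-1}))$ as a nonnegative integer combination of Schur functions.

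The starting observation will be that $D_J^{-1}$ is itself a union of Knuth classes in $S_k$. Indeed, by the classical RSK identity $\Des(\pi^{-1}) = \Des(P(\pi))$, the defining condition $\Des(\pi^{-1}) = J$ depends only on $P(\pi)$, and since Knuth equivalence preserves $P(\pi)$, the set $D_J^{-1} = \{\pi \in S_k \mid \Des(P(\pi)) = J\}$ is Knuth-closed.

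The main step is then the following structural lemma, which I would prove separately: for any Knuth-closed $\Pi \subseteq S_k$, the pattern-avoidance set $S_n(\Pi)$ is itself Knuth-closed in $S_n$. This reduces to showing invariance of pattern containment under a single elementary Knuth move on $\pi$ at positions $i, i+1, i+2$, followed by a case analysis of how $\{i, i+1, i+2\}$ meets the pattern positions $j_1 < \cdots < j_k$ of an occurrence of some $\sigma \in \Pi$ in $\pi$. In the straightforward cases the pattern either persists verbatim (when $\{i+1, i+2\}$ is disjoint from the pattern positions), shifts by one position (when exactly one of $i+1, i+2$ lies among $\{j_l\}$), or is altered by a Knuth move internal to the subsequence itself (when $\{i, i+1, i+2\} \subseteq \{j_l\}$), whence Knuth-closure of $\Pi$ preserves membership. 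The hard part will be the case where $i+1, i+2$ both lie among the pattern positions but $i$ does not: here the induced change on the subsequence is merely an adjacent transposition rather than a Knuth move, and to handle it one must exploit the value $\pi(i) = y$ (satisfying $x < y \le z$ with $x, z$ the swapped values) to substitute $i$ for $i+1$ or $i+2$ in the pattern positions, and then verify, via a subcase argument based on which other subsequence values lie in the interval $(x, y]$, that the resulting subsequence in $\pi'$ standardizes to a permutation still in $\Pi$.
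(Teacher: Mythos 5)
Your strategy---prove $S_n(D_J^{-1})$ is a union of Knuth classes and sum Gessel's identity over them---is exactly the route of Hamaker, Pawlowski and Sagan, whose result the paper cites without reproving, and your observation that $D_J^{-1} = \{\pi \in S_k : \Des(P(\pi)) = J\}$ is Knuth-closed is correct. But the structural lemma you rely on is false. You claim that for \emph{any} Knuth-closed $\Pi \subseteq S_k$ the set $S_n(\Pi)$ is Knuth-closed for all $n$; if this held, then by your own Gessel argument every Knuth-closed $\Pi$ would be symmetrically avoided. Yet the paper records (combining \cite[Theorem 1.2]{S3_patterns_list} and \cite[Theorem 2.6]{BloomSagan}) that for $k \ge 4$ the only two-element symmetrically avoided subset of $S_k$ is $\{\iota_k,\delta_k\}$. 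Now take $\Pi_0 = \{[2,4,1,3],\ [2,1,4,3]\}$: both permutations insert to the standard tableau with first row $1,3$ and second row $2,4$, and there are exactly two standard tableaux of shape $(2,2)$, so $\Pi_0$ is an entire Knuth class, hence Knuth-closed; it has two elements and is not $\{\iota_4,\delta_4\}$; therefore $\Pi_0$ is not symmetrically avoided, and $S_n(\Pi_0)$ fails to be Knuth-closed for some $n$. So Knuth-closure of $\Pi$ is genuinely insufficient.

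The breakdown sits precisely in the case you flag as hard. When exactly two of the three Knuth positions lie among the pattern positions, the induced change on the $k$-term subsequence is an adjacent transposition of two values that need not be a Knuth move of the subsequence, and the proposed substitution of position $i$ for $i+1$ or $i+2$ cannot in general land you back in an arbitrary Knuth-closed $\Pi$. What makes this repairable for $D_J^{-1}$ is that membership in an inverse descent class is governed by the relative left-to-right order of consecutive \emph{values}, which is exactly the local information a Knuth move perturbs in a controlled way; this is the extra structure HPS exploit when proving their strictly stronger ``pattern-Knuth closed'' property for inverse descent classes. Your case analysis needs to invoke the inverse-descent-class hypothesis, not merely Knuth-closure, so as written the hard case is not just tedious but unresolvable.
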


\begin{corollary}
\label{cor:minimal avoidance example}
Let $k \in \NN$. The following subset of $S_k$, of size $k-1$, is symmetrically avoided:
\[
D_{\{k-1\}}^{-1} =  \{k12\dots(k-1),1k23\dots (k-1), \dots, 12\dots(k-2)k (k-1) \}.
\]
\end{corollary}

It is worth noting that Hamaker, Pawlowski and Sagan proved a stronger property for inverse descent classes, known as pattern-Knuth closed.
Their result implies Theorem~\ref{thm:inverse descent class pattern knuth closed}, since every pattern-Knuth closed set is symmetrically avoided~\cite [Section 5]{S3_patterns_list}.



For $k \ge 5$, the presented results establish that $D_{\{k-1\}}^{-1}$ is the minimal symmetrically avoided set in $S_k$ that contains at least $3$ elements. Consequently, for $k \ge 5$ and for every set $\Pi \subseteq S_k$ with $3 \le |\Pi| \le k-2$, there exists $n \in \NN$ such that the set $S_n(\Pi)$ is not symmetric. The proofs for this fact and the case of $|\Pi| \le 2$ by Bloom and Sagan \cite{BloomSagan} begin by considering $n=k$ and demonstrating that $S_k(\Pi)$ is not symmetric. However, no information is currently available regarding the symmetry of $S_n(\Pi)$ for $n > k$.

Motivated by this observation, we introduce the concept of being \say{eventually symmetrically avoided}:
\begin{definition}
We say that a set $\Pi \subseteq S_k$ is \emph{eventually symmetrically avoided} if the set $S_n (\Pi)$ is symmetric for every sufficiently large $n$.
\end{definition}

\begin{question}
For which values of $p \in \mathbb{N}$ is there a threshold $K = K(p)$ such that if $|\Pi| = p$ and $\Pi \subseteq S_k$ for $k \geq K$, with $\Pi \nsubseteq \{\iota_k, \delta_k\}$, then $\Pi$ is not eventually symmetrically avoided?
\end{question}

As far as we know, this question remains open, even for $p = 1$. The condition $\Pi \nsubseteq \{\iota_k, \delta_k\}$ is added because every $\Pi \subseteq \{\iota_k, \delta_k\}$ is known to be symmetrically avoided~\cite[Section 2]{S3_patterns_list}.

It is important to note that a pattern set that is eventually symmetrically avoided may not be symmetrically avoided. An example of this is provided by Bloom and Sagan~\cite[Section 6]{BloomSagan}, who constructed a pattern set $\Pi \subseteq S_5$ such that $S_6(\Pi)$ is not symmetric, but $S_n(\Pi)$ is symmetric for all $n \geq 7$. Thus, $\Pi$ is not symmetrically avoided, but it is eventually symmetrically avoided.


Furthermore, our consideration has been limited to the case of $\Pi \subseteq S_k$ for some fixed $k$. However, in the study of pattern avoidance, it is common to explore sets of patterns $\Pi$ that can contain patterns of different sizes, i.e., $\Pi \subseteq \bigcup_{k \in \mathbb{N}} S_k$. 
Hence, there is a desire to extend our lower bound to the case where $\Pi$ belongs to $\bigcup_{k \in \mathbb{N}} S_k$. Such a generalization would be valuable in broadening the scope of our results.

\section{Acknowledgements}
My sincere thanks to my supervisors, Ron Adin and Yuval Roichman, for their constant support. Special thanks to Noga Alon for valuable insights and suggestions. I also wish to thank Yotam Shomroni, Noam Ta-Shma and Danielle West, for enlightening discussions and many editing suggestions.

\printbibliography
\end{document}